\numberwithin{equation}{section}
\newtheorem{maintheorem}{Theorem}
\newtheorem{theorem}{Theorem}[section]
\newtheorem{lemma}[theorem]{Lemma}
\newtheorem{definition}[theorem]{Definition}
\newtheorem{remark}[theorem]{Remark}
\title[Unstable entropy and generic points]{Unstable entropy of partially hyperbolic diffeomorphisms along non-compact subsets}
\author{Gabriel Ponce}
\address{Departamento de Matem\'atica,
  IMECC-UNICAMP Campinas-SP, Brazil.}
  \email{gaponce@ime.unicamp.br}
\thanks{The author had the financial support of FAPESP process \# 2016/05384-0}
\date{}                                         
\begin{document}
\maketitle

\begin{abstract}
Given a partially hyperbolic diffeomorphism $f:M \rightarrow M$ defined on a compact Riemannian manifold $M$, in this paper we define the concept of unstable topological entropy of $f$ on a set $Y\subset M$ not necessarily compact. Using recent results of J. Yang \cite{Yang} and H. Hu, Y. Hua and W. Wu \cite{HuHuaWu} we extend a theorem of R. Bowen \cite{Bowen1973} proving that, for an ergodic $f$-invariant measure $\mu$, the unstable measure theoretical entropy of $f$ is upper bounded by the unstable topological entropy of $f$ on any set of positive $\mu$-measure. We define a notion of unstable topological entropy of $f$ using a Hausdorff dimension like characterization and we prove that this definition coincides with the definition of unstable topological entropy introduced in \cite{HuHuaWu}. \end{abstract}

\section{Introduction}

Given a smooth compact, connected Riemannian manifold $M$ without boundary we say that a $C^1$ diffeomorphism $f:M \rightarrow M$ is partially hyperbolic if for every point $x\in M$ there is a splitting 
\[T_xM = E^s(x)\oplus E^c(x) \oplus E^u(x) \]
and a Riemannian metric on $M$ such that for all unit vectors $v_s\in E^s(x), v_c\in E^c(x), v_u\in E^u(x)$ we have
\[||Df(x) \cdot v^s|| < ||Df(x)\cdot v^c || < ||Df(x)\cdot v^u ||,\]
and
\[ \max\{ ||Df(x) | E^s(x) || , ||Df(x)^{-1} | E^u(x) ||\} <1.\]
We call $E^s$ and $E^u$ the stable and unstable subbundles of $TM$ respectively. From results of \cite{HPS} there are $f$-invariant foliations $\mathcal F^{\tau}$ tangent to $E^{\tau}$, $\tau=s,u$, called the stable foliation (when $\tau=s$) and the unstable foliation of $f$ (when $\tau=u$).

From the definition we can say that a partially hyperbolic diffeomorphism is composed of a ``hyperbolic'' component, which is the dynamics induced by $f$ along the subbundles $E^s$ and $E^u$, and a ``central'' component which may or may not have contracting or expanding characteristics. A starting point to understand the dynamics of such diffeomorphisms is then try to understand how much influence does the hyperbolic part of $f$ exerts on the dynamics of $f$. A good example of such situation is the use of the accessibility property, which essentially says that any two points can be connected by a path tangent to the hyperbolic components of $f$, to obtain ergodicity for certain partially hyperbolic diffeomorphisms (see for example \cite{BW}).
In the seminal papers \cite{LY1,LY2} F. Ledrappier and L.S. Young were able to give a characterization of the metric entropy of a $C^2$ diffeomorphism $f$ in terms of the ``unstable characteristics'' of $f$, that is, in terms of the contribution of the unstable direction of $f$ to the entropy of $f$. In particular they were able to characterize the measures for which Pesin's entropy formula occurs. A central tool in their results is the concept of unstable metric entropy, which is defined via a certain conditional entropy using an increasing partition subordinated to the unstable foliation. Given the importance of such tool, some attention has been directed to the study of the so called unstable entropy of a partially hyperbolic dynamical system. For example, much more recently, J. Yang \cite{Yang} used such type of entropies to show that the set of Gibbs $u$-states of $C^{1+\alpha}$ partially hyperbolic diffeomorphisms is an upper semi-continuous function of the map in the $C^1$ topology and that the sets of partially hyperbolic diffeomorphisms with either mostly contracting or mostly expanding center are $C^1$ open. Another very interesting example of an application of such tools was given by J. Yang and A. Tahzibi in \cite{YangTahzibi} where they establish a beautiful criterium for $u$-invariance of an $f$-invariant measure based on how large is the entropy (compared to the unstable entropy of $f$) of the dynamics induced on the space of central leaves. M. Poletti \cite{Poletti2018} used this notions in relation to geometric growth to prove a $C^1$ conjugacy result for Anosov maps of the $3$-torus. Also very recently R. Saghin and J. Yang \cite{SaghinYang} used the entropy along expanding foliations as a tool to obtain Gibbs property of certain measures and then applied it to establish a local rigidity result of linear Anosov diffeomorphisms in terms of its Lyapunov exponents. 

With the idea of putting these ``unstable entropy tools'' altogether in a framework similar to the one already existing for the classical entropy, H. Hu, H. Hua and W. Wu \cite{HuHuaWu} redefined the concept of unstable metric entropy $h^u_{\mu}(f)$ (see Definition \ref{defi:umetric}), defined the concept of topological unstable entropy (see Definition \ref{utopological}) and proved several results in the direction of the classical theorems of entropy theory. For example they have proved that their definition of metric unstable entropy coincides with the definition given in \cite{LY1,LY2}, proved a version of the Shannon-McMillan-Breiman theorem for such entropies and also a variational principle relating the metric and topological unstable entropies.
While the unstable metric entropy is defined in \cite{LY1,LY2} through the conditional entropy $H_{\mu}(\xi | f\xi)$, where $\xi$ is an increasing partition subordinated to the unstable manifolds, the topological unstable entropy is defined by taking a definition via refinements of open covers, in analogy to \cite{Bowen}, restricted to a compact subset of the unstable manifold and is proved to be equal to the unstable volume growth of $f$ (see \cite{HuaSaghinXia}).

In \cite{Bowen1973} R. Bowen defined the topological entropy of a homeomorphism $f:X\rightarrow X$ on a subset $Y\subset X$ using a Hausdorff dimension like approach and proved that restricted to compact invariant sets such entropy coincides with the standard entropy. Furthermore it is also proved in \cite{Bowen1973} that, for an $f$-invariant measure $\mu$, the metric entropy of $f$ is upper bounded by the topological entropy restricted to any subset of $X$ with full measure. Inspired by these ideas we define the concept of $H$-unstable topological entropy along a non-compact subset $Y\subset M$ (see Definition \ref{defi:Hentropy}), which we denote by $h^u_H(f,Y)$, and we define the H-unstable topological entropy of a partially hyperbolic diffeomorphism $f$ as being the unstable entropy along the whole manifold $M$. We denote the H-unstable topological entropy of $f$ by $h^u_H(f)$. Then we extend Bowen's Theorem on the upper bound of the metric entropy to the context of unstable entropies (Theorem \ref{theo:main}.(2)) and, using these results and the unstable variational principle we show that the $H$-unstable topological entropy of $f$ coincides with the unstable topological entropy defined via open covers (Theorem \ref{theo:main}.(3)). This provides a characterization of the unstable topological entropy via a Hausdorff dimension approach. In the following theorem, for $x\in M$ and $Y\subset \mathcal F^u(x)$ a compact subset, $h^u(f,Y)$ denotes the unstable entropy of $f$ along $Y$ defined via open-covers (see Definition \ref{utopological}).

\begin{maintheorem} \label{theo:main} Let $f:M\rightarrow M$ be a $C^1$ partially hyperbolic diffeomorphism defined on a compact, connected Riemannian manifold $M$ without boundary. The following are true
\begin{itemize}
\item[1)]  for any $x\in M$, if $Y\subset \mathcal F^u(x)$ is a compact subset then $h^{u}_H(f,Y) \leq h^u(f,Y)$, 
\item[2)] if $\mu$ is an ergodic $f$-invariant probability measure then 
\[h^u_{\mu}(f) \leq h^u_H(f,Y)\]
for every measurable subset $Y\subset M$ with $\mu(Y)>0$; 
\item[3)]
\[h^{u}_H(f) = h^u_{top}(f).\]
\end{itemize}
\end{maintheorem}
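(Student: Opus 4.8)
The plan is to establish the two inequalities $h^u_{top}(f)\le h^u_H(f)$ and $h^u_H(f)\le h^u_{top}(f)$ separately, deducing each from one of the already proven parts of the theorem together with the unstable variational principle of \cite{HuHuaWu}. Throughout I write $h^u_H(f)=h^u_H(f,M)$ and recall that the open-cover unstable topological entropy is $h^u_{top}(f)=\sup_{x}h^u(f,P_x)$, the supremum being taken over compact local unstable plaques $P_x\subset\mathcal F^u(x)$.

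\emph{The inequality $h^u_{top}(f)\le h^u_H(f)$.} Here I would invoke the unstable variational principle of \cite{HuHuaWu}, which asserts $h^u_{top}(f)=\sup_{\mu}h^u_{\mu}(f)$, the supremum ranging over all $f$-invariant Borel probability measures. Since the unstable metric entropy is affine, the supremum is attained along the ergodic measures via the ergodic decomposition, so it suffices to bound $h^u_{\mu}(f)$ for ergodic $\mu$. For such a $\mu$ one has $\mu(M)=1>0$, so part (2) applied to $Y=M$ gives $h^u_{\mu}(f)\le h^u_H(f,M)=h^u_H(f)$. Taking the supremum over all ergodic $\mu$ yields $h^u_{top}(f)\le h^u_H(f)$. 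This direction is short and is exactly where the full-measure case of part (2) is used.

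\emph{The inequality $h^u_H(f)\le h^u_{top}(f)$.} This is the substantive direction, because $h^u_H(f)$ is manufactured from coverings of the whole manifold while part (1) and the quantity $h^u(f,\cdot)$ are leaf-wise objects. The strategy is to localize the Hausdorff-type construction to the unstable foliation. Using the compactness of $M$ and the continuity of $\mathcal F^u$, I would first show that the whole-manifold $H$-unstable entropy is controlled by its leaf-wise values, namely $h^u_H(f,M)\le\sup_{x}h^u_H(f,P_x)$ over compact local unstable plaques $P_x$. The point is that the admissible coverings in the definition of $h^u_H$ are refined only along the unstable direction, so the transverse center--stable directions carry no exponential weight and can be absorbed into a finite foliated atlas supplied by compactness. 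Once this reduction is in place, part (1) gives $h^u_H(f,P_x)\le h^u(f,P_x)$ for every plaque, and hence $h^u_H(f,M)\le\sup_{x}h^u(f,P_x)=h^u_{top}(f)$.

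\emph{The main obstacle.} I expect the principal difficulty to be precisely the reduction $h^u_H(f,M)\le\sup_{x}h^u_H(f,P_x)$: one must argue that an arbitrary economical covering of $M$ by unstable dynamical balls can be reorganized, plaque by plaque within a finite foliated atlas, without inflating the associated Carath\'eodory sum by an exponential factor coming from the transverse directions. Continuity of the unstable foliation and compactness of $M$ should provide the uniform comparison of plaque geometries needed to keep the relevant covering numbers uniform in the base point, while the purely unstable nature of the dynamical balls is what prevents the center--stable directions from contributing to the growth rate. Granting this localization, the two inequalities above combine to give $h^u_H(f)=h^u_{top}(f)$.
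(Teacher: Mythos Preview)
Your overall strategy matches the paper's exactly: the inequality $h^u_{top}(f)\le h^u_H(f)$ comes from part~(2) applied to $Y=M$ together with the unstable variational principle, and the inequality $h^u_H(f)\le h^u_{top}(f)$ comes from localizing to unstable plaques and applying part~(1).

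Where you diverge is in your assessment of the ``main obstacle''. You describe the reduction $h^u_H(f,M)\le\sup_x h^u_H(f,P_x)$ as the substantive step, and sketch an argument about foliated atlases and transverse directions carrying no weight. But look again at Definition~\ref{defi:Hentropy}: by construction $h^u_H(f,M)=\sup_{x\in M}\sup_{\mathcal A}h^u_{H,\mathcal A}(f,M\cap M_x)$, and the admissible covers $\mathcal E=\{E_i\}$ in the definition of $m^x_{\mathcal A,\lambda}$ are already required to satisfy $E_i\subset M_x$. So the quantity is \emph{already} leaf-wise; there is no transverse direction to absorb and no foliated atlas needed. What remains is simply that $M_x=\bigcup_{j}\mathcal F^u(f^j(x))$ is a countable union of compact plaques $\overline{\mathcal F^u(x_i,\delta)}$, and then Lemma~\ref{lemma:prop2}(b) (countable stability of $h^u_H$) gives
\[
h^u_H(f,M)=\sup_x h^u_H(f,M_x)\le \sup_i h^u_H\bigl(f,\overline{\mathcal F^u(x_i,\delta)}\bigr)\le h^u_{top}(f),
\]
the last inequality by part~(1). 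This is precisely the paper's argument, in two lines. Your worry about ``reorganizing an arbitrary economical covering of $M$'' suggests you are picturing a different definition of $h^u_H$ than the one actually given.
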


\section{Preliminaries}

Along all the exposition $M$ is taken to be a smooth compact, connected Riemannian manifold without boundary, $f:M\rightarrow M$ is a $C^1$ (sometimes we require it to be $C^{1+\alpha}$) partially hyperbolic diffeomorphism and $\mathcal F^u$ and $\mathcal F^s$ are the unstable and stable foliations of $f$ respectively.

\subsection{Measure entropy for unstable foliation of partially hyperbolic diffeomorphisms}
In this section we recall the definition of unstable metric entropy of a partially hyperbolic diffeomorphism as defined in \cite{HuHuaWu} and state some properties which will be useful along the rest of the exposition. All along the paper $M$ will denote a smooth compact, connected Riemannian manifold without boundary.

Given a partition $\alpha$ of $M$ we will denote by $\alpha(x)$, $x\in M$, the element of $\alpha$ which contains $x$.

\begin{definition} \label{def:mensurable.partition}
We say that a partition $\alpha$ of $M$ is measurable with respect to $\mu$ if there exist a family $\{A_i\}_{i \in \mathbb N}$ of measurable sets and a measurable set $F$ of full $\mu$-measure such that 
if $B \in \alpha$, then there exists a sequence $\{B_i\}_{i\in \mathbb N}$, where $B_i \in \{A_i, A_i^c \}$ such that 
\[B \cap F = \bigcap_i B_i \cap F.\]
\end{definition}

For a certain $\varepsilon_0>0$ small enough denote by $\mathcal P = \mathcal P_{\varepsilon_0}$ the set of all finite measurable partitions of $M$ whose elements have diameter at most equal to $\varepsilon_0$. For each $\beta \in \mathcal P$ we can define a partition $\eta$ given by 
\[\eta(x) = \beta(x) \cap \mathcal F^u_{loc}(x)\]
where $\mathcal F^u_{loc}(x)$ denotes the local unstable manifold at $x$ whose size is greater than $\varepsilon_0$. This partition $\eta$ is then a measurable partition and $\eta$ is finer than $\beta$. Let $\mathcal P^u=\mathcal P^u_{\varepsilon_0}$ be the set of all partitions $\eta$ obtained in this manner.

\begin{definition}
A partition $\xi$ of $M$ is said to be subordinated to the unstable manifolds of $f$ with respect to a measure $\mu$ if for $\mu$-almost every $x$, $\xi(x) \subset \mathcal F^u(x)$ and $\xi(x)$ contains an open neighborhood of $x$ in $\mathcal F^u(x)$.
\end{definition}

Let $\mu$ be a probability measure on $M$ and $\alpha $ and $\eta$ two measurable partitions of $M$. The classical Rokhlin's Theorem (see \cite{Rohlin52}) guarantees the existence of a canonical system of conditional measures which disintegrates $\mu$ relative to $\eta$, that is, there exists a family of probability measures $\{\mu^{\eta}_x: x\in M\}$ such that 
\begin{itemize}
\item $\mu_x^{\eta}(\eta(x))=1$;
\item for every measurable subset $B\subset M$ the function $x\mapsto \mu_x^{\eta}(B)$ is a $\mathcal B(\eta)$-measurable function, where $\mathcal B(\eta)$ is the sub-$\sigma$-algebra generated by $\eta$, and;
\item for $B\subset M$ measurable,
\[\mu(B) = \int_M \mu^{\eta}_x(B) d\mu(x).\]
\end{itemize}
The conditional entropy of $\alpha$ given $\eta$ with respect to $\mu$ is defined by
\[H_{\mu}(\alpha | \eta) = -\int_M \log \mu_x^{\eta}(\alpha(x)) d\mu(x)\]
where $\{\mu^{\eta}_x : x\in M\}$ is a family of conditional measures of $\mu$ relative to $\eta$ as defined above.

\begin{definition} \label{defi:umetric}
The conditional entropy of $f$ with respect to a measurable partition $\alpha$ given $\eta \in \mathcal P^u$ is defined as 
\[h_{\mu}(f, \alpha|\eta) = \limsup_{n\rightarrow \infty} \frac{1}{n} H_{\mu}(\alpha_0^{n-1} | \eta),\]
where $\alpha_0^{m} := \bigvee_{i=0}^{m}f^{-i}\alpha$, for $m\in \mathbb N$.
The conditional entropy of $f$ given $\eta \in \mathcal P^u$ is defined by
\[h_{\mu}(f|\eta) = \sup_{\alpha \in \mathcal P}h_{\mu}(f,\alpha | \eta),\]
and the unstable metric entropy of $f$ is defined setting 
\[h_{\mu}^u(f) = \sup_{\eta \in \mathcal P^u} h_{\mu}(f | \eta).\]
\end{definition}

The following are standard properties in entropy theory so that we state them without a proof.
\begin{lemma}\label{lemma:auxx} \quad \\
\begin{itemize}
\item[a)] For $\beta, \eta$ measurable partitions and $n\in \mathbb N$ we have
\[h_{\mu}(f^n, \beta^{n-1}_0 | \eta) = n \cdot h_{\mu}(f,\beta | \eta).\]
\item[b)] For $\beta$ and $\eta$ measurable partitions we have
\[h_{\mu}(f,\beta | \eta) \leq h_{\mu}(f, \alpha | \eta) + H_{\mu}(\beta | \alpha).\]
\end{itemize}
\end{lemma}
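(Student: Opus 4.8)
Both identities are formal consequences of the definition of $h_{\mu}(f,\cdot|\eta)$, the $f$-invariance of $\mu$, and the elementary properties of conditional entropy, namely monotonicity under refinement of the conditioning partition, the chain rule $H_{\mu}(\gamma\vee\delta|\eta)=H_{\mu}(\delta|\eta)+H_{\mu}(\gamma|\delta\vee\eta)$, and subadditivity $H_{\mu}(\bigvee_i\gamma_i|\eta)\le\sum_iH_{\mu}(\gamma_i|\eta)$. The only genuinely non-formal point, which I treat last, is that $h_{\mu}(f,\beta|\eta)$ is defined through a $\limsup$ rather than a limit, so the power rule in part (a) cannot be obtained by passing to a subsequence alone.

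For part (b) I would argue at finite level. Writing $\gamma=\beta_0^{n-1}$ and $\delta=\alpha_0^{n-1}$, monotonicity, the chain rule, and the fact that conditioning on a finer partition decreases entropy give $H_{\mu}(\gamma|\eta)\le H_{\mu}(\gamma\vee\delta|\eta)=H_{\mu}(\delta|\eta)+H_{\mu}(\gamma|\delta\vee\eta)\le H_{\mu}(\delta|\eta)+H_{\mu}(\beta_0^{n-1}|\alpha_0^{n-1})$. By subadditivity and $f$-invariance the last term satisfies $H_{\mu}(\beta_0^{n-1}|\alpha_0^{n-1})\le\sum_{i=0}^{n-1}H_{\mu}(f^{-i}\beta|\alpha_0^{n-1})\le\sum_{i=0}^{n-1}H_{\mu}(f^{-i}\beta|f^{-i}\alpha)=n\,H_{\mu}(\beta|\alpha)$, where the middle step uses that $\alpha_0^{n-1}$ refines $f^{-i}\alpha$ and the last equality uses $f_{*}\mu=\mu$. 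Dividing by $n$ and taking $\limsup_{n\to\infty}$ yields $h_{\mu}(f,\beta|\eta)\le h_{\mu}(f,\alpha|\eta)+H_{\mu}(\beta|\alpha)$.

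For part (a) I would first record the combinatorial identity that, under $g:=f^{n}$ (for which $\mu$ is again invariant and $\eta$ is again an admissible conditioning partition, as $\mathcal F^u$ is unchanged), one has $\bigvee_{j=0}^{m-1}g^{-j}(\beta_0^{n-1})=\bigvee_{j=0}^{m-1}\bigvee_{i=0}^{n-1}f^{-(nj+i)}\beta=\beta_0^{nm-1}$, since the exponents $nj+i$ run exactly once over $\{0,\dots,nm-1\}$. Hence $h_{\mu}(g,\beta_0^{n-1}|\eta)=\limsup_{m}\tfrac1m H_{\mu}(\beta_0^{nm-1}|\eta)=n\,\limsup_{m}\tfrac{1}{nm}H_{\mu}(\beta_0^{nm-1}|\eta)$, and it remains only to identify this $\limsup$ over multiples of $n$ with the full $\limsup$ defining $n\,h_{\mu}(f,\beta|\eta)$.

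This last identification is where I expect the only real work. Setting $b_N:=H_{\mu}(\beta_0^{N-1}|\eta)$ and $C:=H_{\mu}(\beta)<\infty$ (finite since $\beta\in\mathcal P$), I write an arbitrary $N$ as $N=nm+r$ with $0\le r<n$ and use subadditivity together with $H_{\mu}(f^{-i}\beta|\eta)\le H_{\mu}(f^{-i}\beta)=C$ to get $b_N\le b_{nm}+rC\le b_{nm}+nC$, whence $\tfrac{b_N}{N}\le\tfrac{b_{nm}}{nm}+\tfrac{C}{m}$ with $m=\lfloor N/n\rfloor$. Letting $N\to\infty$ forces $m\to\infty$, so $\limsup_{N}\tfrac{b_N}{N}\le\limsup_{m}\tfrac{b_{nm}}{nm}$; the reverse inequality is trivial, so the two agree. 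Combining with the previous display gives $h_{\mu}(f^{n},\beta_0^{n-1}|\eta)=n\,h_{\mu}(f,\beta|\eta)$. The finiteness of $H_{\mu}(\beta)$ is precisely what makes the $\limsup$ stable under this reindexing, and is the step one should not omit.
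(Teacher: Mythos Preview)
The paper gives no proof of this lemma at all: it simply declares that ``the following are standard properties in entropy theory so that we state them without a proof.'' Your argument is correct and supplies precisely the standard details the paper omits. In particular, your treatment of part~(a) is careful in the one place it needs to be: since $h_{\mu}(f,\beta\mid\eta)$ is defined as a $\limsup$, one cannot simply pass to the subsequence $(nm)_{m}$ without justification, and your estimate $b_N\le b_{nm}+nC$ with $C=H_{\mu}(\beta)<\infty$ closes that gap cleanly. One small remark: the lemma as stated only assumes $\beta$ is a measurable partition, whereas your finiteness step uses $H_{\mu}(\beta)<\infty$; in the paper's applications $\beta$ is always a finite Borel partition, so this is harmless, but you may want to state that hypothesis explicitly.
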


\begin{remark} Given an $f$-invariant splitting $E^s\oplus E^c\oplus E^u$ of $M$, $f^{-1}$ is also partially hyperbolic with splitting $E^s_{f^{-1}} \oplus E^c \oplus E^u_{f^{-1}}$ where $E^s_{f^{-1}}:=E^u$ and $E^u_{f^{-1}}:=E^s$. However it is not true that, with respect to these splittings, $h^{u}_{\mu}(f)=h^{u}_{\mu}(f^{-1})$. A simple example is given by the following. Fix any $k_0\geq 5$ and take $f:\mathbb T^3 \rightarrow \mathbb T^3$ be the linear automorphism of $\mathbb T^3$ induced by 
\[A=\left(\begin{array}{lll} 0 & 0 & 1 \\ 0 & 1 & -1 \\ -1 & -1 & k_0 \end{array}\right).\]
An easy calculation (see \cite[Lemma $4.1$]{PT}) shows that $A$ has three real distinct eigenvalues $0<\lambda^s < \lambda^c < 1 < \lambda^u$. Write $E^{\tau}(x)$ the subspace of $T_x\mathbb T^3$ induced by the eigenspace of $A$ with respect to $\lambda^{\tau}$, $\tau=s,c,u$. As the Lebesgue measure $m$ on $\mathbb T^3$ is $f$ invariant and is clearly $u$-Gibbs we have by \cite[Theorem $3.4$]{Ledrappier} (also stated in \cite[Proposition $5.3$]{Yang}) that
\[h^u_{m}(f) = \int \log \operatorname{Jac}_f^u(x)dm(x) = \log \lambda^u.\]
 Now, if we regard $f^{-1}$ as a partially hyperbolic diffeomorphism with splitting $E^s_{f^{-1}}\oplus E^c_{f^{-1}} \oplus E^u_{f^{-1}}$ given by $E^s_{f^{-1}}(x):=E^u(x)$, $E^c_{f^{-1}}(x)=E^c(x)$ and $E^u_{f^{-1}}(x)=E^s(x)$ we have, by the same argument,
 \[h^u_{m}(f^{-1}) = \int \log \operatorname{Jac}_{f^{-1}}^u(x)dm(x) = -\log \lambda^s = \log \lambda^u + \log \lambda^c < h^u_{m}(f).\]
\end{remark}

\subsection{Unstable topological entropy of a compact subset}
Let $C^0_M$ denote the set of all finite open covers of $M$. Given $\mathcal U \in C^0_M$ denote $\mathcal U^n_m:= \bigvee_{i=m}^n f^{-i}\mathcal U$.
For any $K\subset M$ denote 
\[N(\mathcal U | K):= \min \{\text{card}( \mathcal V ): \mathcal V \subset \mathcal U, \bigcup_{V\in \mathcal V}V \supset K \},\]
and
\[H(\mathcal U | K):= \log N(\mathcal U | K),\]
where $\text{card}( \mathcal V )$ denotes the cardinality of the family of sets in $\mathcal V$.

\begin{definition}[\cite{HuHuaWu}] \label{utopological}
Let $d^u_x$ be the metric induced by the Riemannian structure on the unstable manifold $\mathcal F^u(x)$. With respect to the metric $d^u_x$ we denote by $\mathcal F^u(x,\delta)$ the open ball of radius $\delta$ inside $\mathcal F^u(x)$ centered at $x$. Given a compact subset $K\subset \mathcal F^u(x)$ we define the unstable entropy of $K$ by
\[h^{u}(f,K) = \sup_{\mathcal U \in C^0_M} \limsup_{n\rightarrow \infty} \frac{1}{n}H(\mathcal U_0^{n-1} | K).\]
If $Y\subset M$ is a compact subset, then we define 
\[h^u(f,Y)=\lim_{\delta \rightarrow 0} \sup_{x\in Y} h^{u}(f,Y\cap \overline{\mathcal F^u(x,\delta)}).\]
At last, the unstable topological entropy of $f$ is defined by:
\[h_{top}^u(f) = \lim_{\delta \rightarrow 0} \sup_{x\in M} h^{u}(f,\overline{\mathcal F^u(x,\delta)}).\]
\end{definition}

\begin{theorem}\cite[Theorem D]{HuHuaWu} \label{theo:variational}
Let $f:M\rightarrow M$ be a $C^1$-partially hyperbolic diffeomorphism. Then 
\[h^u_{top}(f) = \sup \{h^u_{\mu}(f) : \mu \in \mathcal M_f(M) \} = \sup \{h^u_{\mu}(f) : \mu \in \mathcal M_f^e(M) \}\]
where $M_f(M)$ (resp. $M_f^e(M)$) denotes the space of $f$-invariant probability measures (resp.  $f$-invariant ergodic probability measures) on $M$.
\end{theorem}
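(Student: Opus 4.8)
The plan is to prove the two inequalities
\[\sup_{\mu \in \mathcal M_f(M)} h^u_\mu(f) \leq h^u_{top}(f) \qquad \text{and} \qquad h^u_{top}(f) \leq \sup_{\mu \in \mathcal M_f^e(M)} h^u_\mu(f),\]
and then note that the quantity $\sup_{\mu \in \mathcal M_f^e(M)} h^u_\mu(f)$ is squeezed between these two, forcing the three expressions to coincide. The passage between the supremum over all invariant measures and the supremum over ergodic ones will follow from the ergodic decomposition together with the affineness of the map $\mu \mapsto h^u_\mu(f)$.

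For the upper bound I would fix an ergodic $\mu$, a partition $\eta \in \mathcal P^u$ arising from some $\beta \in \mathcal P$, and a partition $\alpha \in \mathcal P$, and try to dominate the conditional entropy $H_\mu(\alpha_0^{n-1}\mid\eta)$ by a covering number $H(\mathcal U_0^{n-1}\mid K)$ for a suitable open cover $\mathcal U$ refining $\alpha$ and a suitable unstable plaque $K$. Using Rokhlin disintegration one has $H_\mu(\alpha_0^{n-1}\mid\eta) = \int H_{\mu_x^\eta}(\alpha_0^{n-1})\, d\mu(x)$, and since the entropy of a finite partition is at most the logarithm of its number of atoms, the integrand is bounded by the log of the number of atoms of $\alpha_0^{n-1}$ that meet $\eta(x)$ in positive conditional measure. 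Because each such atom has small diameter in the dynamical metric and $\eta(x)$ sits inside a single unstable plaque, this count is comparable to $N(\mathcal U_0^{n-1}\mid \eta(x))$. Taking $\limsup \frac1n$, then the supremum over $\alpha$ and over $\eta$, and finally letting the plaque size shrink, recovers $h^u_{top}(f)$ as the dominating quantity.

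For the lower bound I would run a Misiurewicz-type construction adapted to the unstable foliation. Fix $\delta>0$ and an open cover $\mathcal U$, and choose a plaque $\overline{\mathcal F^u(x,\delta)}$ realizing, up to $\epsilon$, the covering growth defining $h^u_{top}(f)$. Inside this plaque select for each $n$ a maximal $(n,\epsilon)$-separated set $E_n$ in the unstable dynamical metric, whose cardinality grows like $e^{n h^u_{top}(f)}$; form the empirical measures $\sigma_n = \frac{1}{\#E_n}\sum_{y\in E_n}\delta_y$, take the Birkhoff averages $\mu_n = \frac1n\sum_{i=0}^{n-1} f^i_*\sigma_n$, and pass to a weak-$*$ accumulation point $\mu \in \mathcal M_f(M)$. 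A semicontinuity estimate, carried out with the conditional unstable partitions in place of the global ones, then gives $h^u_\mu(f) \geq h^u_{top}(f) - o(1)$ as $\epsilon \to 0$, after which ergodic decomposition lets me replace $\mu$ by one of its ergodic components without loss.

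The main obstacle is that, unlike in the classical theory, the conditional measures $\mu_x^\eta$ live on unstable plaques and transform under $f$ through the unstable Jacobian $\operatorname{Jac}^u_f$, so neither the disintegration in the upper bound nor the separated-set counting in the lower bound interacts transparently with the dynamics. Controlling the distortion of these conditional measures along long orbit segments, and guaranteeing that the plaque chosen in the lower bound captures the supremal covering growth uniformly in $n$, is where the real work lies; the unstable Shannon--McMillan--Breiman theorem of \cite{HuHuaWu} is precisely the tool that tames this distortion and makes the counting arguments go through.
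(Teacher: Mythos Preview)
This theorem is not proved in the present paper at all: it is quoted verbatim from \cite{HuHuaWu} (their Theorem~D) and used as a black box in the proof of item~(3) of Theorem~\ref{theo:main}. So there is no ``paper's own proof'' to compare your attempt against.

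That said, your outline is the correct strategy and is essentially the route taken in \cite{HuHuaWu}: the upper bound comes from bounding $H_\mu(\alpha_0^{n-1}\mid\eta)$ by plaque covering numbers via the disintegration, and the lower bound is a Misiurewicz construction carried out inside a single unstable plaque. What you have written, however, is a plan rather than a proof. You yourself flag the main gap: in the lower bound, the passage from the empirical measures $\mu_n$ to the limit $\mu$ requires an upper semicontinuity estimate for the \emph{conditional} entropy $H_\mu(\alpha_0^{n-1}\mid\eta)$, and this is genuinely more delicate than in the classical case because the conditional measures $\mu_x^\eta$ do not behave well under weak-$*$ limits. In \cite{HuHuaWu} this is handled by a careful choice of partitions with small boundary together with a uniform bound on the number of atoms of $\alpha_0^{n-1}$ meeting each plaque; your invocation of the unstable Shannon--McMillan--Breiman theorem at this step is misplaced, since that theorem concerns a fixed measure and gives no control on how conditional entropies vary along a sequence $\mu_n\to\mu$. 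If you want to turn the sketch into a proof you should consult \cite{HuHuaWu} directly for the technical lemmas that make the limiting argument work.
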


\section{Unstable topological entropy of non-compact subsets} \label{sec:defi}
Let $f:M \to M$ be a $C^1$ partially hyperbolic diffeomorphism defined on a compact Riemannian manifold $M$ and let $\mathcal F^u$ be its unstable foliation.

For each $x\in M$ we denote by $M_x$ the union of all the unstable leaves along the orbit of $x$, that is,
\[M_x:= \bigcup_{j=-\infty}^{\infty} \mathcal F^u(f^j(x)).\]
%
Given any open cover $\mathcal A$ of $M$ and a subset $E\subset M$ we say that $E$ \textit{is thinner than} $\mathcal A$, and we denote it by $E \prec \mathcal A$, if there exists a set $A \in \mathcal A$ such that $E\subset A$.
Fixed a finite open cover $\mathcal A$ of $M$, for each $E\subset M_x$ we denote: 
\[n_{f,\mathcal A}(E) = \text{ the biggest nonnegative integer for which } f^k(E) \prec \mathcal A \text{ for all integer } k\in [0,n_{f,\mathcal A}(E))\]
and \[D_{\mathcal A}(E) := e^{-n_{f,\mathcal A}(E)}.\]
If $\mathcal E = \{E_i : i=1,2,\ldots\}$ is a family of sets with $E_i \subset M_x$ for every $i\geq 1$ we define
\[D_{\mathcal A}(\mathcal E, \lambda) := \sum_{i=1}^{\infty} D_{\mathcal A}(E_i)^{\lambda}.\]
Similar to the idea of the definition of Hausdorff measure we define a measure $m^x_{\mathcal A, \lambda}$ in $M_x$ in the following way: for each $Y\subset M_x$
\[m^x_{\mathcal A, \lambda}(Y)= \lim_{\varepsilon \rightarrow 0} \; \inf \left\{ D_{\mathcal A}(\mathcal E, \lambda) : \mathcal E = \{E_i \subset M_x: i=1,2,\ldots \}, \bigcup_{i=1}^{\infty} E_i \supset Y , D_{\mathcal A}(E_i) < \varepsilon \right\}.\]
Now, in analogy to the definition of Hausdorff dimension, we define
\[h^{u}_{H,\mathcal A}(f,Y) = \inf \{ \lambda: m^x_{\mathcal A, \lambda}(Y)=0\}, \quad Y \subset M_x.\]

\begin{definition}\label{defi:Hentropy} For $Y\subset M$ and $x\in M$ we define
\[h^{u}_H(f,Y,x) = \sup_{\mathcal A}h^{u}_{H,\mathcal A}(f,Y \cap M_x)\]
where the sup ranges over all finite open covers of $M$.
Finally we define the $H$-unstable entropy of $Y$ by
\[h^{u}_H(f, Y ) = \sup_{x \in Y} h^u_H(f,Y,x).\]
We define the $H$-unstable entropy of $f$, and we denote it by $h^u_H(f)$, by taking $Y=M$, that is,
\[h^u_H(f) : = h^u_H(f,M).\]
\end{definition}
%

The following proposition is a generalization of properties which are well known to be satisfied for the classical entropy setting.
%
%

%

\begin{lemma}\label{lemma:prop2} For $f:M\rightarrow M$ a partially hyperbolic diffeomorphism of a compact manifold $M$ the following are true:
\begin{itemize}
\item[a)] $h^{u}_H(f,f(Y)) = h^{u}_H(f,Y)$, $Y\subset M$.
\item[b)] $h^u_H(f,\bigcup_{i=1}^{\infty} Y_i) = \sup_i h^u_H(f,Y_i)$, $Y_i\subset M$ for all $i$.
\item[c)] $h^u_H(f^m,Y) = m\cdot h^u_H(f,Y)$ for any natural number $m>0$, $Y\subset M$.
\end{itemize}
\end{lemma}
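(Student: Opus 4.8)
The plan is to prove the three items directly from the definition of $h^u_H$, mimicking the corresponding arguments for Bowen's classical topological entropy but carrying them through the Hausdorff-dimension-like machinery built on the measures $m^x_{\mathcal A,\lambda}$. The key mechanical tool in all three parts will be tracking how the quantity $n_{f,\mathcal A}(E)$, and hence $D_{\mathcal A}(E)=e^{-n_{f,\mathcal A}(E)}$, transforms under the relevant operations on the covering sets $E\subset M_x$. Throughout I would work at the level of a fixed finite open cover $\mathcal A$, establish the identity or inequality for $h^u_{H,\mathcal A}$, and only at the very end take the supremum over $\mathcal A$ (for items a and b) or relate covers of $f^m$ to covers of $f$ (for item c).

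For item (a), the first observation is that $f(M_x)=M_x$, since $M_x$ is a union of unstable leaves along the \emph{full} orbit of $x$ and is therefore $f$-invariant; consequently covering families of $Y\cap M_x$ correspond to covering families of $f(Y)\cap M_x$ via $E\mapsto f(E)$. First I would show that for a set $E\subset M_x$ the counting function shifts in a controlled way: $f^k(f(E))=f^{k+1}(E)$, so $n_{f,\mathcal A}(f(E))$ and $n_{f,\mathcal A}(E)$ differ by at most $1$. This gives $D_{\mathcal A}(f(E))$ and $D_{\mathcal A}(E)$ comparable up to the fixed factor $e$, whence $m^x_{\mathcal A,\lambda}(f(Y))=0$ if and only if $m^x_{\mathcal A,\lambda}(Y)=0$, yielding $h^u_{H,\mathcal A}(f,f(Y)\cap M_x)=h^u_{H,\mathcal A}(f,Y\cap M_x)$. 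Taking the supremum over $\mathcal A$ and then over the relevant base points (noting $f(Y)\cap M_{f(x)}=f(Y\cap M_x)$ and $M_{f(x)}=M_x$) gives the claimed equality. The one point to handle carefully is the base-point supremum in $h^u_H(f,\cdot)$, ensuring that the bijection $x\mapsto f(x)$ between the index sets for $Y$ and $f(Y)$ matches the invariance $M_{f(x)}=M_x$.

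For item (b), the countable stability is exactly the analogue of the fact that Hausdorff dimension of a countable union is the supremum of the dimensions, and the proof is the standard one. First I would fix $x$ and $\mathcal A$ and show $h^u_{H,\mathcal A}(f,(\bigcup_i Y_i)\cap M_x)=\sup_i h^u_{H,\mathcal A}(f,Y_i\cap M_x)$. The inequality $\geq$ is immediate from monotonicity of $m^x_{\mathcal A,\lambda}$ under inclusion. For $\leq$, if $\lambda$ exceeds every $h^u_{H,\mathcal A}(f,Y_i\cap M_x)$ then each $m^x_{\mathcal A,\lambda}(Y_i\cap M_x)=0$, and countable subadditivity of the outer measure $m^x_{\mathcal A,\lambda}$ (which follows from its construction as an infimum over countable covering families, just as for Hausdorff measure) forces $m^x_{\mathcal A,\lambda}((\bigcup_i Y_i)\cap M_x)=0$; taking the infimum over such $\lambda$ gives the reverse inequality. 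Supping over $\mathcal A$ and over base points $x$ completes the argument, where the interchange of the two suprema ($\sup_x\sup_i=\sup_i\sup_x$) is harmless.

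Item (c) is where I expect the main difficulty, since it requires comparing the dynamics of $f$ and $f^m$ at the level of the covers $\mathcal A$, and the subtlety is that $n_{f^m,\mathcal A}(E)$ counts iterates of $f^m$ staying thin while $n_{f,\mathcal A}(E)$ counts iterates of $f$. The natural approach is to relate the $f^m$-counting function for a cover $\mathcal A$ to the $f$-counting function for the refined cover $\mathcal A^{m-1}_0=\bigvee_{i=0}^{m-1}f^{-i}\mathcal A$: one checks that $f^{jm}(E)\prec\mathcal A$ for all $0\le j<N$ is essentially equivalent to $f^k(E)\prec\mathcal A$ for all $0\le k<Nm$ being organized in blocks, giving $n_{f,\mathcal A}(E)\approx m\cdot n_{f^m,\mathcal A}(E)$ up to a bounded additive error, and more cleanly $n_{f^m,\mathcal{A}^{m-1}_0}(E)=n_{f,\mathcal A}(E)$ up to $O(1)$. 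This translates into $D_{\mathcal A}$ for $f^m$ being comparable to $D^{1/m}$ for $f$, so that the scaling parameter $\lambda$ for $f^m$ corresponds to $m\lambda$ for $f$, forcing $h^u_{H,\mathcal A}(f^m,\cdot)=m\cdot h^u_{H,\mathcal A}(f,\cdot)$ in the limit. The hard part will be bookkeeping the additive $O(1)$ errors in the block decomposition and confirming that, after passing to the supremum over all finite covers $\mathcal A$, these refined covers $\mathcal A^{m-1}_0$ range over a cofinal family so that the supremum is unchanged; once that cofinality is verified the factor $m$ emerges cleanly and the identity $h^u_H(f^m,Y)=m\cdot h^u_H(f,Y)$ follows.
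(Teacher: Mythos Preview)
Your plan for items (a) and (b) is essentially the paper's own argument: for (a) the paper tracks the shift $n_{f,\mathcal A}(f(E))=n_{f,\mathcal A}(E)\pm 1$ and deduces that the vanishing of $m^x_{\mathcal A,\lambda}$ is preserved, and for (b) it isolates the countable subadditivity of $m^x_{\mathcal A,\lambda}$ as a separate sublemma and then argues exactly as you describe.

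For item (c) there is a genuine difference. The paper takes the first route you mention and works with the \emph{same} cover $\mathcal A$ for both $f$ and $f^m$, asserting $n_{f^m,\mathcal A}(E)=\lfloor n_{f,\mathcal A}(E)/m\rfloor$. Your hesitation about that direct comparison is warranted: only one inequality is automatic, since knowing $f^k(E)\prec\mathcal A$ for $0\le k<n$ forces $f^{jm}(E)\prec\mathcal A$ whenever $jm<n$, but the failure $f^n(E)\not\prec\mathcal A$ gives no control over $f^{jm}(E)$ for $jm>n$, so $n_{f^m,\mathcal A}(E)$ can in principle exceed $\lfloor n_{f,\mathcal A}(E)/m\rfloor$. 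Your preferred route through the refined cover $\mathcal A_0^{m-1}$ sidesteps this cleanly: there one has the exact relation $n_{f^m,\mathcal A_0^{m-1}}(E)=\lfloor n_{f,\mathcal A}(E)/m\rfloor$ (note the factor $m$ belongs here, not $n_{f^m,\mathcal A_0^{m-1}}(E)=n_{f,\mathcal A}(E)$ as you wrote---that is a slip), giving $h^u_{H,\mathcal A_0^{m-1}}(f^m,\cdot)=m\cdot h^u_{H,\mathcal A}(f,\cdot)$, and then the cofinality you flag (each finite open cover $\mathcal B$ is refined by $\mathcal B_0^{m-1}$, and refining a cover can only increase $h^u_{H,\cdot}$) recovers the full supremum. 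So your approach to (c) is both different from and tighter than the paper's.
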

\begin{proof}
Given any $x\in Y$ and any finite open cover $\mathcal A$ of $M$, for any collection $\mathcal E=\{E_i : E_i \subset M_x\}_{i\in \mathbb N}$, with $\bigcup_{i=1}^{\infty}E_i \supset Y$, we have that the collection of the images $f(\mathcal E) := \{f(E_i) : f(E_i) \subset M_x\}_{i\in \mathbb N}$ satisfies $f(Y) \subset \bigcup_{i=1}^{\infty}f(E_i) $ and, since
\[n_{f,\mathcal A}(f(E_i)) = n_{f,\mathcal A}(E_i)-1,\]
we have
\[D_{\mathcal A}(f(\mathcal E), \lambda) = \sum_{i=1}^{\infty}D_{\mathcal A}(f(E_i))^{\lambda} = e^{\lambda}\cdot  \sum_{i=1}^{\infty}D_{\mathcal A}(E_i)^{\lambda}.\]
In particular if $m^{f(x)}_{\mathcal A, \lambda}(f(Y))=0$, then $m^x_{\mathcal A, \lambda}(Y)=0$ which implies 
\[h^{u}_{H,\mathcal A}(f,Y,x) \leq h^{u}_{H,\mathcal A}(f,f(Y),f(x)),\]
and consequently,
\begin{equation}\label{eq:rev0}
h^{u}_H(f,Y) \leq h^{u}_H(f,f(Y)).\end{equation}
The reverse inequality is obtained in a similar manner. Given any $x\in f(Y)$ and any finite open cover $\mathcal A$ of $M$, for any collection $\mathcal E=\{E_i : E_i \subset M_x\}_{i\in \mathbb N}$, with $\bigcup_{i=1}^{\infty}E_i \supset f(Y)$ and $D_{\mathcal A}(E_i)<1$, we have that the collection of the pre-images $f^{-1}(\mathcal E) := \{f^{-1}(E_i) : f^{-1}(E_i) \subset M_x\}_{i\in \mathbb N}$ satisfies $Y \subset \bigcup_{i=1}^{\infty}f^{-1}(E_i) $ and, since
\[n_{f,\mathcal A}(f^{-1}(E_i)) = n_{f,\mathcal A}(E_i)+1,\]
we have
\[D_{\mathcal A}(f^{-1}(\mathcal E), \lambda) = \sum_{i=1}^{\infty}D_{\mathcal A}(E_i)^{\lambda} = e^{-\lambda}\cdot  \sum_{i=1}^{\infty}D_{\mathcal A}(E_i)^{\lambda}.\]
In particular, if $m^{f^{-1}(x)}_{\mathcal A, \lambda}(Y)=0$ then $m^x_{\mathcal A, \lambda}(f(Y))=0$ which implies 
\[h^{u}_{H,\mathcal A}(f,f(Y),x) \leq h^{u}_{H,\mathcal A}(f,Y,f^{-1}(x)),\]
and therefore
\begin{equation}\label{eq:rev01}
h^{u}_H(f,f(Y)) \leq h^{u}_H(f,Y).\end{equation}
Thus, \eqref{eq:rev0} and \eqref{eq:rev01} proves the first item.

To prove the second item let us show the following subadditivity property \footnote{This property is actually true for a much more general set up, see for example Proposition $1.1$ from \cite{YP4}. Moreover the proof of the lemma we present here is based on the argument presented to prove the referred proposition in \cite{YP4}.}.
\begin{lemma}\label{lemma:revaux0}
For any finite open cover $\mathcal A$ of $M$ we have
\[m^x_{\mathcal A, \lambda}\left(\bigcup_{i=1}^{\infty} Y_i \cap M_x \right) \leq \sum_{i=1}^{\infty} m^x_{\mathcal A, \lambda}(Y_i \cap M_x), \quad x\in \bigcup_{i=1}^{\infty} Y_i .  \]
\end{lemma}
\begin{proof}
Given any $\delta>0$, $\varepsilon>0$ and $i\geq 0$ we can find, for each $i \in \mathbb N$, a constant $0<\varepsilon_i \leq \varepsilon$ and a countable collection $\mathcal E_i = \{E_{i,j}: E_{i,j}\subset M_x\}$ such that $\bigcup_{j=1}^{\infty}E_{i,j}\supset Y_i \cap M_x$ with $D_{\mathcal A}(E_{i,j}) \leq \varepsilon_i$, for every $j$, and with
\[\left| m^x_{\mathcal A, \lambda}(Y_i \cap M_x) - D_{\mathcal A}(\mathcal E_i,\lambda) \right| \leq \frac{\delta}{2^i}. \]
Now, the collection $\mathcal E = \{E_{i,j}: i\geq 0, j\geq 0 \}$ covers the union $\bigcup_{i=1}^{\infty}Y_i \cap M_x$ and $D_{\mathcal A}(E_{i,j}) \leq \varepsilon_i \leq \varepsilon$ for every $i,j \geq 0$. Furthermore we have
\begin{align*}
D_{\mathcal A}(\mathcal E,\lambda) & = \sum_{i,j}D_{\mathcal A}(E_{i,j})^{\lambda} \\
 & \leq \sum_{j} \sum_{i}\left( \frac{\delta}{2^i} + m^x_{\mathcal A, \lambda}(Y_i \cap M_x) \right) \\
 & = 2\delta+  \sum_{i}m^x_{\mathcal A, \lambda}(Y_i \cap M_x).\end{align*}
Since $\varepsilon$ can be taken to be arbitrarily small the last inequality implies
 \[m^x_{\mathcal A, \lambda}\left(\bigcup_{i=1}^{\infty} Y_i \cap M_x \right) \leq 2\delta+  \sum_{i}m^x_{\mathcal A, \lambda}(Y_i \cap M_x).\]
 But since $\delta$ can also be taken to be arbitrarily small the desired inequality follows.
 \end{proof}

Let us go back to the proof of the second item. Observe that for any $x\in \bigcup_{i=1}^{\infty}Y_i$ and any finite open cover $\mathcal A$ of $M$, if $\lambda > h^u_{H,\mathcal A}(Y_i)$ for every $i$ then, by definition, $m^x_{\mathcal A, \lambda}(Y_i\cap M_x)=0$ and by Lemma \ref{lemma:revaux0} we have
\[m^x_{\mathcal A, \lambda}\left(\left[\bigcup_{i=1}^{\infty} Y_i \right] \cap M_x \right) \leq \sum_{i=1}^{\infty}m^x_{\mathcal A, \lambda}(Y_i \cap M_x) = 0.\]
Thus
\[h^u_{H,\mathcal A}\left(f,\bigcup_{i=1}^{\infty} Y_i, x \right) \leq \lambda \Rightarrow h^u_{H,\mathcal A}\left(f,\bigcup_{i=1}^{\infty} Y_i, x \right) \leq \sup_{i}h^u_{H,\mathcal A}(Y_i). \]
Therefore, by taking the supremum over $x$ and over the finite cover $\mathcal A$, we have
\[ h^u_{H}\left(f,\bigcup_{i=1}^{\infty} Y_i \right) \leq \sup_{i}h^u_{H}(Y_i).\]
From the definition it is clear that if $E' \subset E$ then $m^x_{\mathcal A,\lambda}(E')\leq m^x_{\mathcal A,\lambda}(E)$, for any $x\in E'$ and any open cover $\mathcal A$ of $M$ since any collection covering $E$ also covers $E'$. Thus $h^u_H(f,E') \leq h^u_H(f,E)$. In particular, 
\[ \sup_{i}h^u_{H}(Y_i) \leq h^u_{H}\left(f,\bigcup_{i=1}^{\infty} Y_i \right),\]
concluding the proof of the second item of the proposition.

Let us prove the third item, finishing the prove of the proposition. Consider $\mathcal A$ to be a finite cover of $M$, $x\in M$ and $\mathcal E = \{E_i\}$ be a countable family of sets covering $Y\cap M_x$ and with $E_i \subset M_x$, for all $i$.  Let $m>0$ be a fixed natural number.
Observe that $(f^m)^k(E_i) \prec \mathcal A \Leftrightarrow m\cdot k<n_{f,\mathcal A}(E_i)$, that is, $n_{f^m,\mathcal A}(E_i) = \lfloor n_{f,\mathcal A}(E_i) /m \rfloor$. In particular
\[ \frac{n_{f,\mathcal A}(E_i)}{m}-1< n_{f^m,\mathcal A}(E_i) \leq \frac{n_{f,\mathcal A}(E_i)}{m},\]
and consequently
\[\sum_{i}e^{-n_{f,\mathcal A}(E_i)\frac{\lambda}{m}}\leq \sum_{i}e^{-n_{f^m,\mathcal A}(E_i)\lambda}<e^{\lambda}\cdot  \sum_{i}e^{-n_{f,\mathcal A}(E_i)\frac{\lambda}{m}}.\]
In particular 
\[m^x_{\mathcal A,\frac{\lambda}{m}, f} (Y) =0 \; \text{if and only if} \; m^x_{\mathcal A, \lambda, f^m}(Y)=0,\]
where the indexes $f$ and $f^m$ are put in the above expressions to indicate with respect to each function the evaluations of the definition of the $H$-unstable entropy are being made. Therefore,
\[h^u_{H,\mathcal A}(f^m,Y\cap M_x) = m\cdot h^u_{H,\mathcal A}(f,Y\cap M_x),\]
and by the arbitrariness of $\mathcal A$ and $x\in M$ it follows immediately that 
\[h^u_{H}(f^m,Y) = m\cdot h^u_{H}(f,Y),\]
as we wanted to show.

\end{proof}

Using Lemma \ref{lemma:prop2} we can give still another characterization \footnote{Some months after the submission of this paper X. Tian and W. Wu made a preprint \cite{TianWu} where they use the expression at the right hand side of Theorem \ref{prop:equiv} to prove several theorems concerning the unstable entropy of non-compact subsets. Although some of the results they obtained are already contained in this paper, their results where stablished following a different approach.} of the unstable entropy of a general subset. In the following Theorem, $h(f,Z)$ denotes the usual entropy of $f$ along a subset $Z\subset M$ as defined by Bowen in \cite{Bowen1973}.

\begin{theorem} \label{prop:equiv} For $Y\subset M$ we have
\[h^u_{H}(f,Y) = \lim_{\delta \rightarrow 0} \sup_{x\in M} h(f, \overline{\mathcal F^u(x,\delta)} \cap Y).\] 
\end{theorem}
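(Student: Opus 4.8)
The plan is to first reduce the intrinsic quantity $h^u_H(f,Y)$ to Bowen's entropy of the orbit-leaf sets $Y\cap M_x$, and only afterwards to localize to small unstable balls. The key observation is that the sole difference between the Hausdorff-type quantity $h^u_{H,\mathcal A}(f,Z)$ (for $Z\subset M_x$) and Bowen's $\mathcal A$-entropy $h_{\mathcal A}(f,Z)$ is that the covering families defining $m^x_{\mathcal A,\lambda}$ are required to consist of subsets of $M_x$, whereas Bowen's outer measure $m_{\mathcal A,\lambda}$ allows arbitrary subsets of $M$. Since $n_{f,\mathcal A}$ can only grow when a set is shrunk (if $E'\subset E$ then $f^k(E')\subset f^k(E)$, so $f^k(E)\prec\mathcal A$ forces $f^k(E')\prec\mathcal A$), one has $D_{\mathcal A}(E')\le D_{\mathcal A}(E)$. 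Hence, given any Bowen covering family $\{E_i\}$ of a set $Z\subset M_x$, the family $\{E_i\cap M_x\}$ still covers $Z$, lies in $M_x$, and satisfies $D_{\mathcal A}(E_i\cap M_x)\le D_{\mathcal A}(E_i)<\varepsilon$; combined with the trivial fact that $M_x$-families are themselves Bowen families, this gives $m^x_{\mathcal A,\lambda}(Z)=m_{\mathcal A,\lambda}(Z)$ for every $Z\subset M_x$, and therefore
\[ h^u_H(f,Y,x) = h(f, Y \cap M_x), \qquad h^u_H(f,Y) = \sup_{x \in Y} h(f, Y \cap M_x). \]

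With this identity the theorem becomes the statement that $\sup_{x\in Y} h(f,Y\cap M_x)=\lim_{\delta\to0}\sup_{x\in M} h(f,\overline{\mathcal F^u(x,\delta)}\cap Y)$, which I would establish by proving two inequalities using only monotonicity and countable stability of Bowen's entropy from \cite{Bowen1973}. Write $R(\delta)=\sup_{x\in M} h(f,\overline{\mathcal F^u(x,\delta)}\cap Y)$; since $\overline{\mathcal F^u(x,\delta)}$ increases with $\delta$ and Bowen's entropy is monotone under inclusion, $R(\delta)$ is nondecreasing in $\delta$, so its limit exists and equals $\inf_{\delta>0}R(\delta)$. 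For the inequality $\lim_\delta R(\delta)\le \sup_{x\in Y} h(f,Y\cap M_x)$, fix $x\in M$ and $\delta>0$; if the intersection is nonempty choose $y\in\overline{\mathcal F^u(x,\delta)}\cap Y$, so that $\mathcal F^u(y)=\mathcal F^u(x)$ and hence $\overline{\mathcal F^u(x,\delta)}\cap Y\subset Y\cap M_y$ with $y\in Y$; monotonicity then yields $h(f,\overline{\mathcal F^u(x,\delta)}\cap Y)\le \sup_{z\in Y} h(f,Y\cap M_z)$, and taking the supremum over $x$ and then the limit over $\delta$ finishes this direction (the empty case being trivial).

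For the reverse inequality I would fix $x\in Y$ and bound $h(f,Y\cap M_x)$ by $R:=\lim_\delta R(\delta)$. Since $M_x=\bigcup_{j\in\mathbb Z}\mathcal F^u(f^j x)$ is a countable union of leaves, countable stability gives $h(f,Y\cap M_x)=\sup_j h(f,Y\cap\mathcal F^u(f^j x))$. Each leaf $\mathcal F^u(f^j x)$, with its intrinsic Riemannian metric, is a connected manifold, hence separable and Lindel\"of, so for every $\delta>0$ it is covered by countably many intrinsic open balls $\mathcal F^u(z_k,\delta)$, whence $Y\cap\mathcal F^u(f^j x)\subset\bigcup_k\big(Y\cap\overline{\mathcal F^u(z_k,\delta)}\big)$. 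Countable stability and monotonicity then give $h(f,Y\cap\mathcal F^u(f^j x))\le\sup_k h(f,Y\cap\overline{\mathcal F^u(z_k,\delta)})\le R(\delta)$; as this holds for every $\delta$, we obtain $h(f,Y\cap\mathcal F^u(f^j x))\le\inf_\delta R(\delta)=R$, hence $h(f,Y\cap M_x)\le R$, and taking the supremum over $x\in Y$ closes the argument.

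The step I expect to be the main obstacle is the first one: making the identification $m^x_{\mathcal A,\lambda}=m_{\mathcal A,\lambda}$ fully rigorous requires verifying that the covering families in Bowen's construction may genuinely be taken to be arbitrary subsets of $M$ (so that intersecting with $M_x$ is legitimate), and checking that the outer limit $\lim_{\varepsilon\to0}$ in both definitions is respected by the shrinking operation, which it is since $D_{\mathcal A}(E_i\cap M_x)\le D_{\mathcal A}(E_i)$. A secondary point demanding care is the Lindel\"of covering of a possibly noncompact, densely immersed unstable leaf by countably many intrinsic $\delta$-balls and the compatibility of the resulting countable union with the stability property invoked. Everything else follows softly from the monotonicity and countable-stability properties already available, both for $h^u_H$ (Lemma \ref{lemma:prop2} and Lemma \ref{lemma:revaux0}) and, on the Bowen side, from \cite{Bowen1973}.
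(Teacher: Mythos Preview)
Your argument is correct and uses the same core observation as the paper: intersecting a covering family with $M_x$ can only increase $n_{f,\mathcal A}$, so $m^x_{\mathcal A,\lambda}=m_{\mathcal A,\lambda}$ on subsets of $M_x$. The organization differs slightly. The paper applies this identification only to the small sets $\overline{\mathcal F^u(x,\delta)}\cap Y$, obtaining $h^u_H(f,\overline{\mathcal F^u(x,\delta)}\cap Y)=h(f,\overline{\mathcal F^u(x,\delta)}\cap Y)$, and then compares these with $h^u_H(f,Y)$ via monotonicity and the countable stability of $h^u_H$ (Lemma~\ref{lemma:prop2}(b)). You instead push the identification all the way to $Y\cap M_x$, obtaining the clean intermediate formula $h^u_H(f,Y)=\sup_{x\in Y}h(f,Y\cap M_x)$, and then localize on the Bowen side using monotonicity and countable stability of $h$ from \cite{Bowen1973}. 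Your route yields a slightly stronger byproduct (the pointwise identity $h^u_H(f,Y,x)=h(f,Y\cap M_x)$), while the paper's route avoids any explicit appeal to the Lindel\"of property of leaves by absorbing that step into Lemma~\ref{lemma:prop2}(b); both are equally valid and of comparable length.
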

\begin{proof}

First of all let us prove that 
\[h^u_{H}(f, \overline{\mathcal F^u(x,\delta)} \cap Y) = h(f, \overline{\mathcal F^u(x,\delta)} \cap Y),\]
for any $x\in M$ and $\delta>0$.

Let $\mathcal A$ be any open cover of $M$. For any cover $\widetilde{\mathcal E} = \{\widetilde{E_i} : i\in \mathbb N\}$ of $\overline{\mathcal F^u(x,\delta)} \cap Y$ we can consider the associate cover $\mathcal E = \{E_i \cap M_x : i\in \mathbb N\}$ which is subordinated to $M_x$. Also, $n_{f,\mathcal A}(E_i) \geq n_{f,\mathcal A}(\widetilde{E_i})$, thus
\begin{equation}\label{eq:ineqcompare}
m_{\mathcal A, \lambda}(\overline{\mathcal F^u(x,\delta)} \cap Y) \geq m^x_{\mathcal A, \lambda}(\overline{\mathcal F^u(x,\delta)} \cap Y).
\end{equation}
On the other hand, fixed $\mathcal A$ we clearly have
\[\left\{\mathcal E = \{E_i \subset M_x\}_i, \bigcup_{i=1}^{\infty} E_i \supset Z , D_{\mathcal A}(E_i) < \varepsilon \right\} \subset \left\{\mathcal E = \{E_i \}_i, \bigcup_{i=1}^{\infty} E_i \supset Z , D_{\mathcal A}(E_i) < \varepsilon \right\},\]
where $Z:= \overline{\mathcal F^u(x,\delta)} \cap Y$, thus we have the other hand of \eqref{eq:ineqcompare} from where we obtain
\[m_{\mathcal A,\lambda}(\overline{\mathcal F^u(x,\delta)} \cap Y) = m^x_{\mathcal A,\lambda}(\overline{\mathcal F^u(x,\delta)} \cap Y).\]
In particular we have
\[h(f,\overline{\mathcal F^u(x,\delta)} \cap Y) =  h^u_{H}(f,\overline{\mathcal F^u(x,\delta)} \cap Y).\]

Now, given any $x\in M$ and any $\delta>0$ we have $ \overline{\mathcal F^u(x,\delta)} \cap Y \subset Y$ thus $h^u_H(f, \overline{\mathcal F^u(x,\delta)} \cap Y) \leq h^u_H(f,Y)$ which implies
\[\lim_{\delta \rightarrow 0} \sup_{x\in M} h(f, \overline{\mathcal F^u(x,\delta)} \cap Y) \leq h^u_H(f,Y).\]

Let us prove the other side. Given any $\delta>0$, we can write $M_x$ as a countable union of the form
\[M_x = \bigcup_{i\in \mathbb N} \overline{\mathcal F^u(x_i,\delta)},\]
for certain points $x_i$, $i\in \mathbb N$. Thus for any $x\in M$, by item (b) of Lemma \ref{lemma:prop2} we have
\[h^u_{H,\mathcal A}(f,Y \cap M_x) \leq \sup_{i} h^u_{H,\mathcal A}(f,Y \cap \overline{\mathcal F^u(x_i,\delta)} )  \leq  \sup_{x\in M} h(f, \overline{\mathcal F^u(x,\delta)} \cap Y).\]
Making $\delta \rightarrow 0$ at the right side we obtain
\[h^u_{H,\mathcal A}(f,Y \cap M_x) \leq \lim_{\delta\rightarrow 0}  \sup_{x\in M} h(f, \overline{\mathcal F^u(x,\delta)} \cap Y).\]
Finally, taking the supremum over $x$ and over the finite cover $\mathcal A$ at the left side we obtain:
\[h^u_{H}(f,Y) = \lim_{\delta \rightarrow 0} \sup_{x\in M} h(f, \overline{\mathcal F^u(x,\delta)} \cap Y),\]
as desired.
\end{proof}

%
%
%

%
%
%
%
%
%

\section{Proof of Theorem \ref{theo:main}}

The arguments used to prove items $(1)$ and $(2)$ of Theorem \ref{theo:main} are similar to those given in \cite{Bowen1973} but, as the proof of the second item relies on the Shannon-McMillan-Breiman, which for the case of the unstable entropy, is given in terms of a conditional information function, we need to overcome the issue of always dealing with conditional measures instead of the original one. This is done in Lemma \ref{lemma:2}.
Item $(3)$ follows as a consequence of the two first items and the variational principle for unstable entropy.

\begin{proof}[Proof of item(1)]
The proof of the first item is similar to the first part of the proof of Proposition $1$ in \cite{Bowen1973}. However, we repeat the proof in details here for the sake of clarity.

Let $Y\subset \mathcal F^u(x)$ be a compact set and let $\mathcal A$ be any finite open cover of $M$. Let $\widetilde{\mathcal E_n}$ be a subcover of $Y$ with $N(\mathcal A_0^{n-1}|Y)$ members. Thus, if we consider $\mathcal E_n$ to be the collection of sets $E \cap \mathcal F^u(x)$ with $E\in \widetilde{\mathcal E_n}$ we have a family of $N(\mathcal A_0^{n-1} | Y)$ subsets of $\mathcal F^u(x)$ covering $Y$. Consequently
\[D_{\mathcal A}(\mathcal E_n,\lambda) \leq N(\mathcal A_0^{n-1}|Y)e^{-n\lambda}, \]
which implies
\[m^x_{\mathcal A,\lambda}(Y) \leq \lim_{n\rightarrow \infty} \left[ e^{-\lambda+ \frac{1}{n}H(\mathcal A_0^{n-1} | Y)} \right]^n.\]
If $\lambda > h^u(f,Y)$ then for $n$ large enough we have $-\lambda+ \frac{1}{n} H(\mathcal A_0^{n-1}|Y) <0$ which implies $m^x_{\mathcal A,\lambda}(Y) =0$ and consequently
\[h^{u}_{\mathcal A}(f,Y,x) \leq h^{u}(f,Y) \Rightarrow h^{u}_H(f,Y) \leq h^{u}(f,Y). \]
\end{proof}

The proof of the second item follows from the following lemmas.
\begin{lemma}\label{lemma:2}
Let $f:M\rightarrow M$ be a $C^1$ partially hyperbolic diffeomorphism defined on a compact, connected Riemannian manifold $M$ without boundary and let $\mu$ be an $f$-invariant ergodic measure. Assume that there exists a finite Borel partition $\alpha$ of $M$ such that every $x\in M$ is in the closure of at most $c$ sets of $\alpha$. Then, if $\mu(Y)>0$ we have
\[h_{\mu}^u(f)\leq h^u_H(f,Y,x_0) + \log c, \]
for $\mu$-almost every $x_0 \in Y$. In particular,
\[h_{\mu}^u(f)\leq h^u_H(f,Y) + \log c. \]
\end{lemma}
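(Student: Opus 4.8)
The plan is to establish the reverse, mass-distribution (Frostman-type) inequality: for $\varepsilon>0$ and $\lambda=h^u_\mu(f)-\varepsilon-\log c$ I will exhibit a finite open cover $\mathcal A$ of $M$ for which $m^{x_0}_{\mathcal A,\lambda}(Y\cap M_{x_0})>0$, so that $\lambda\le h^u_{H,\mathcal A}(f,Y\cap M_{x_0})$ and hence, letting $\varepsilon\to0$, $h^u_H(f,Y,x_0)\ge h^u_\mu(f)-\log c$. The positive mass will be supplied by the conditional measure $\nu:=\mu^\eta_{x_0}$ on the atom $\eta(x_0)\subset\mathcal F^u(x_0)\subset M_{x_0}$, where $\eta\in\mathcal P^u$ is the partition associated to $\alpha$ (that is, $\eta(x)=\alpha(x)\cap\mathcal F^u_{loc}(x)$). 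I take $\alpha$ of diameter at most $\varepsilon_0$; such fine partitions may be chosen with boundary overlap bounded by a fixed $c$, and for them $h_\mu(f,\alpha|\eta)=h^u_\mu(f)$ by the identification of the unstable metric entropy in \cite{HuHuaWu}, so it suffices to run the argument with $h_\mu(f,\alpha|\eta)$ in place of $h^u_\mu(f)$.

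First I fix the cover. Since every point of $M$ lies in the closure of at most $c$ elements of $\alpha$, a Lebesgue-number argument yields $\delta_0>0$ such that any subset of diameter $<\delta_0$ meets at most $c$ elements of $\alpha$; let $\mathcal A$ be a finite open cover whose members have diameter $<\delta_0$. For $E\subset M_{x_0}$ with $n:=n_{f,\mathcal A}(E)$, each iterate $f^k(E)$, $0\le k<n$, lies in a single member of $\mathcal A$ and so meets at most $c$ elements of $\alpha$; equivalently $E$ meets at most $c$ elements of $f^{-k}\alpha$, whence $E$ meets at most $c^{\,n}$ atoms of $\alpha_0^{n-1}$. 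Feeding in the unstable Shannon--McMillan--Breiman theorem of \cite{HuHuaWu}, namely $-\tfrac1n\log\mu^\eta_y(\alpha_0^{n-1}(y))\to h_\mu(f,\alpha|\eta)=h^u_\mu(f)$ for $\mu$-a.e.\ $y$, I obtain, on a set $G_N$ where this upper bound is uniform, the Frostman estimate
\[\nu(E\cap G_N)\ \le \sum_{\substack{A\in\alpha_0^{n-1},\ A\cap E\neq\emptyset\\ A\cap G_N\neq\emptyset}}\nu(A)\ \le\ c^{\,n}\,e^{-n(h^u_\mu(f)-\varepsilon)}\ =\ D_{\mathcal A}(E)^{\,\lambda},\qquad n\ge N.\]
Summing over any cover $\mathcal E=\{E_i\}$ of $Y\cap M_{x_0}$ by subsets of $M_{x_0}$ with $D_{\mathcal A}(E_i)<e^{-N}$ then gives $D_{\mathcal A}(\mathcal E,\lambda)\ge\sum_i\nu(E_i\cap G_N)\ge\nu(Y\cap G_N)$, so $m^{x_0}_{\mathcal A,\lambda}(Y\cap M_{x_0})\ge\nu(Y\cap G_N)$, which is positive as soon as $\nu(Y\cap G_N)>0$.

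The step I expect to be the main obstacle — and the reason the lemma is isolated — is that this estimate must hold for the \emph{single} conditional measure $\nu=\mu^\eta_{x_0}$, while the unstable Shannon--McMillan--Breiman convergence is only an almost-everywhere statement with respect to $\mu$. I would resolve it with three facts about the disintegration. First, $\mu^\eta_y=\mu^\eta_{x_0}$ for every $y\in\eta(x_0)$, so the limit above, read along the leaf, is a statement about the one measure $\nu$. Second, if $N_0$ is the $\mu$-null set where the convergence fails, then $0=\mu(N_0)=\int\mu^\eta_{x_0}(N_0)\,d\mu(x_0)$, so for $\mu$-a.e.\ $x_0$ the set $N_0$ is $\nu$-null; an Egorov truncation on the full-$\nu$-measure good set produces $G_N\subset\eta(x_0)$, increasing to a set of full $\nu$-measure, on which $\mu^\eta_y(\alpha_0^{m-1}(y))\le e^{-m(h^u_\mu(f)-\varepsilon)}$ for all $m\ge N$. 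Third, writing $Y_0=\{x\in Y:\mu^\eta_x(Y)=0\}$ and using that $\mu^\eta_x$ is constant on atoms, one checks $\mu^\eta_x(Y_0)=0$ for every $x$, hence $\mu(Y_0)=\int\mu^\eta_x(Y_0)\,d\mu(x)=0$; thus $\nu(Y)>0$ for $\mu$-a.e.\ $x_0\in Y$, and since $G_N$ increases to a set of full $\nu$-measure we obtain $\nu(Y\cap G_N)>0$ for $N$ large. Combining the three facts makes the Frostman construction valid for $\mu$-a.e.\ $x_0\in Y$, giving $h^u_\mu(f)\le h^u_H(f,Y,x_0)+\log c$ there, and the displayed ``in particular'' follows at once from $h^u_H(f,Y)=\sup_{x\in Y}h^u_H(f,Y,x)$.
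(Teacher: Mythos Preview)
Your argument is correct and follows the same Bowen--Frostman scheme as the paper: fix a fine cover $\mathcal A$ so that any set thinner than $\mathcal A$ meets at most $c$ elements of $\alpha$, invoke the unstable Shannon--McMillan--Breiman theorem to bound the $\nu$-mass of dynamical cylinders, and conclude that every admissible cover $\mathcal E$ of $Y\cap M_{x_0}$ satisfies $D_{\mathcal A}(\mathcal E,\lambda)\ge \nu(Y\cap G_N)>0$ for $\lambda=h^u_\mu(f)-\varepsilon-\log c$.

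The one substantive difference from the paper is your choice of Frostman mass. The paper works with the ambient measure $\mu$: it first bounds $\mu^\eta_{y}(\beta)$ for $\beta\in\alpha_0^{n}$ meeting the good set $Y_{1/m,N}$, then \emph{integrates} over the space to obtain $\mu(\beta\cap Y_{1/m,N})\le e^{-(a-2/m)n}$, and finally sums to reach $\sum_i\mu(E_i\cap Y_{1/m,N})\ge \mu(Y_{1/m,N})>0$. You instead keep the single conditional measure $\nu=\mu^\eta_{x_0}$, which lives on the plaque $\eta(x_0)\subset M_{x_0}$, and run the mass-distribution argument there. This is arguably the cleaner route: since the covers $\mathcal E$ in the definition of $m^{x_0}_{\mathcal A,\lambda}$ are required to lie in $M_{x_0}$, a reference measure supported on $M_{x_0}$ is exactly what is needed, and your three disintegration facts (constancy of $\mu^\eta_y$ on atoms, $\nu$-nullity of the SMB exceptional set, and $\nu(Y)>0$ for $\mu$-a.e.\ $x_0\in Y$) close the loop. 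The paper's last inequality, by contrast, implicitly asks the family $\{E_i\}\subset M_{x_0}$ to cover the global set $Y_{1/m,N}$, which need not sit inside $M_{x_0}$; your formulation sidesteps this issue entirely. Both proofs share the implicit use of $\alpha\in\mathcal P$ so that the SMB theorem of \cite{HuHuaWu} applies and $h_\mu(f,\alpha\mid\eta)=h^u_\mu(f)$; since in the application (item~(2) of Theorem~\ref{theo:main}) the partition $\alpha_n$ can be taken of arbitrarily small diameter, this is harmless.
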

\begin{proof}
Let $\eta \in \mathcal P^u$ be any fixed partition. For each $y\in M$ consider
\[I_n(y):= I_{\mu}(\alpha_0^{n-1} | \eta)(y),\]
where $I_{\mu}(\xi | \beta)$ denotes the conditional information function of $\xi \in \mathcal P$ with respect to a measurable partition $\beta$ of $M$ defined by $I_{\mu}(\xi | \beta)(x):=-\log \mu_{x}^{\beta}(\xi(x))$.
By the Shannon-McMillan-Breiman Theorem for the unstable entropy \cite[Theorem $B$]{HuHuaWu} we have
\begin{equation}\label{eq:SMB}
\lim_{n\rightarrow +\infty}\frac{1}{n}I_n(x) = h_{\mu}^u(f) = h^u_{\mu}(f,\alpha|\eta) =:a
\end{equation}
for $\mu$-almost every point $x\in M$. Let $\tilde{Y}\subset Y$ be the subset of $Y$ for which \eqref{eq:SMB} occurs and take an arbitrary point $x_0\in \tilde{Y}$. For $\delta>0$ and $N\in \mathbb N$ denote
\[Y_{\delta,N}:=\left\{y\in \tilde{Y}: \frac{1}{n}I_n(y) \geq a-2\delta, \quad \forall n\geq N\right\}.\]
Observe that, for any fixed $m\in \mathbb  N$, $m>0$, we have
\[\tilde{Y} = \bigcup_{N \in \mathbb N}Y_{1/m,N}\; ,\]
so that we can take $N = N(m)\in \mathbb N$ for which $\mu(Y_{1/m,N}) > 0$.
Now, let $\mathcal B$ be a finite open cover of $M$ such that each set of $\mathcal B$ intersects at most $c$ elements of $\alpha$. Suppose that $\mathcal E = \{E_i\}_i$, $E_i \subset M_{x_0}$, covers $Y_0=Y\cap M_{x_0}$ with $D_{\mathcal B}(E_i) \leq e^{-N}$. 

If $\beta \in \alpha_{0}^{n_{f,\mathcal A}(E_i)} = \bigvee_{i=0}^{n_{f,\mathcal A}(E_i)} f^{-i}\alpha$ intersects $Y_{1/m, N}$, say $y_0 \in \beta \cap Y_{1/m, N}$, then $\beta = \alpha_{0}^{n_{f,\mathcal A}(E_i)}(y_0)$ and
\[-\frac{1}{n_{f,\mathcal A}(E_i) +1}\log \mu^{\eta}_{y_0}(\beta)  \geq a-\frac{2}{m}  \Rightarrow \mu^{\eta}_{y_0}(\beta) \leq \exp (-(a-2/m) n_{f,\mathcal A}(E_i)) .\]
Thus, given any $y_0 \in \beta \cap Y_{1/m, N}$ and any $y\in \eta(y_0)$ we have
\begin{equation}\label{eq:conds}
\mu^{\eta}_{y}(\beta) \leq \exp (-(a-2/m) n_{f,\mathcal A}(E_i)).
\end{equation}
Denote by $\eta(F)$ the $\eta$ saturation of a set $F\subset M$, that is, $\eta(F):= \bigcup_{y\in F}\eta(y)$. From the definition of the system of conditional measures we obtain
\begin{equation}\label{eq:conds2}
 \int_{M\setminus \eta(\beta \cap Y_{1/m,N})} \mu^{\eta}_{y}(\beta \cap Y_{1/m,N}) d\mu(y) =0.
\end{equation}
Now, by \eqref{eq:conds} and \eqref{eq:conds2} we have
\begin{align*}
\mu(\beta \cap Y_{1/m,N}) = & \int_{M}\mu^{\eta}_{y}(\beta \cap Y_{1/m,N}) d\mu(y) \\
 = & \int_{\eta(\beta \cap Y_{1/m,N})} \mu^{\eta}_y(\beta \cap Y_{1/m,N}) d\mu(y) + \int_{M\setminus \eta(\beta \cap Y_{1/m,N})} \mu^{\eta}_{y}(\beta \cap Y_{1/m,N}) d\mu(y) \\
 \leq & \exp (-(a-2/m) n_{f,\mathcal A}(E_i)).
%
\end{align*}
%
%
Thus, since $E_i\cap Y_{1/m,N}$ is covered by at most $c^{n_{f,\mathcal A}(E_i)+1}$ elements of the form $\beta \in \alpha_{0}^{n_{f,\mathcal A}(E_i)}$, we have
\begin{align*}
\mu(E_i \cap Y_{1/m,N}) \leq &  c^{n_{f,\mathcal A}(E_i)+1}\exp (-(a-2/m) n_{f,\mathcal A}(E_i)) \\
= & c\cdot \exp ((\log c-a+2/m) n_{f,\mathcal A}(E_i)).\end{align*}
Thus, for $\lambda = -\log c+a-2/m$ we have 
\[D_{\mathcal A}(\mathcal E,\lambda) = \sum_{i} \exp(-\lambda n_{f,\mathcal A}(E_i)) \geq \frac{1}{c}\sum_{i} \mu(E_i \cap Y_{1/m,N}) \geq \frac{1}{c}\mu(Y_{1/m,N}).\]
Letting the cover $\mathcal E$ vary we have $m^{x_0}_{\mathcal A,\lambda}(Y)\geq c^{-1}\mu(Y_{1/m,N}) >0$ which implies 
\[h^u_H(f,Y,x_0) \geq h^u_{H,\mathcal A}(f,Y \cap M_{x_0}) \geq \lambda = -\log c+a-2/m.\]
Taking $m\rightarrow \infty$ we obtain
\[h_{\mu}^u(f)\leq h^u_H(f,Y,x_0) + \log c\]
and 
\[h_{\mu}^u(f)\leq h^u_H(f,Y) + \log c. \]
as we wanted to show.
%
\end{proof}


To prove the second item we need the following Lemmas from \cite{Bowen1973}.

\begin{lemma}\cite[Lemma $2$]{Bowen1973}\label{lemma:22}
Let $\mathcal A$ be a finite open cover of $M$. For each $n>0$ there is a finite Borel partition $\alpha_n$ of $M$ such that $f^k \alpha_n \prec \mathcal A$ for all $k\in [0,n)$ and at most $n\cdot \text{card}( \mathcal A)$ sets in $\alpha_n$ can have a point in all their closures.
\end{lemma}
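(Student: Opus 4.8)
The plan is to follow Bowen's two-stage strategy, the whole difficulty being concentrated in the control of the closure multiplicity. Writing $\mathcal A=\{A_1,\dots,A_r\}$ with $r=\text{card}(\mathcal A)$, the first stage is to produce a Borel partition $\beta\prec\mathcal A$ with at most $r$ members by the ordered disjointification
\[B_i:=A_i\setminus\bigcup_{j<i}A_j,\qquad i=1,\dots,r.\]
Since each $A_j$ is open, the inclusion $B_i\subset A_j^{c}$ gives $\overline{B_i}\cap A_j=\emptyset$ for every $j<i$; hence a point $y$ can lie in $\overline{B_i}$ only for indices $i\le i^{*}(y):=\min\{\ell:y\in A_\ell\}$, and in particular $y$ lies in the closure of at most $r$ members of $\beta$. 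This order property is the only structural input from the first stage.

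The second stage is to look for $\alpha_n$ among the Borel partitions refining the dynamical join $\bigvee_{k=0}^{n-1}f^{-k}\beta$. For any such refinement the first assertion of the lemma is automatic: every cell is contained in some $f^{-k}B_{i_k}\subset f^{-k}A_{i_k}$, so $f^{k}$ maps it into $A_{i_k}$ and therefore $f^{k}\alpha_n\prec\mathcal A$ for all $k\in[0,n)$. The entire content of the lemma is thus to realize this refinement with closure multiplicity at most $nr$.

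For the multiplicity I would apply the order property coordinatewise: if $x\in\overline{\bigcap_k f^{-k}B_{i_k}}$ then $f^{k}x\in\overline{B_{i_k}}$ for every $k$, so $i_k\le i^{*}_k(x):=\min\{\ell:f^{k}x\in A_\ell\}$ for all $k$, with strict inequality at a coordinate $k$ forcing $f^{k}x\in\partial A_{i_k}$. Hence the cells whose closure contains $x$ are indexed by vectors dominated by $\vec i^{*}(x)=(i^{*}_0(x),\dots,i^{*}_{n-1}(x))$ and differing from it only at the times at which the orbit of $x$ meets $\partial\beta$. If these index vectors can be made totally ordered by the coordinatewise order, then, reading off a maximal decreasing chain starting from $\vec i^{*}(x)$, their number is at most $1+\sum_{k=0}^{n-1}(i^{*}_k(x)-1)\le nr$, which is precisely the required bound.

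The main obstacle is exactly to construct $\alpha_n$ so that, around every point, the cells form such a chain; equivalently, so that the boundaries contributed by the successive times are non-crossing rather than transverse. The plain join does not have this property, since at a point whose orbit meets $\partial\beta$ at several independent times the nonempty cells proliferate faster than linearly in $n$. I would therefore build $\alpha_n$ inductively, one time-step at a time: given a partition refining $\bigvee_{k<m}f^{-k}\beta$ with a chain structure at every point, I would cut it by $f^{-m}\beta$ while redrawing the new separating sets so that, in a neighbourhood of the already-constructed boundary, they are absorbed into it instead of crossing it, thus preserving the chain structure and adding at most $r$ to the multiplicity at each step. Carrying out this ``absorbing'' redrawing in an arbitrary compact metric space, where no transversality or general-position argument is available, is the delicate point, and is where I expect the real work of the proof to lie.
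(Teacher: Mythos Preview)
The paper does not prove this lemma at all: it is quoted verbatim as \cite[Lemma~2]{Bowen1973} and used as a black box in the proof of item~(2). So there is no ``paper's own proof'' to compare against; the relevant benchmark is Bowen's original argument.

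Your first stage --- the ordered disjointification $B_i=A_i\setminus\bigcup_{j<i}A_j$ and the observation that $\overline{B_i}\subset M\setminus\bigcup_{j<i}A_j$, hence $y\in\overline{B_i}\Rightarrow i\le i^{*}(y)$ --- is exactly Bowen's starting point, and your coordinatewise analysis of the join is correct. You are also right that the plain join $\bigvee_{k=0}^{n-1}f^{-k}\beta$ does \emph{not} give the $nr$ bound: already for a two-element cover and a map that makes the pulled-back boundaries transverse (e.g.\ a cyclic coordinate shift on $\mathbb{T}^n$ with $\beta$ splitting along one coordinate), the centre point lies in the closure of all $2^{n}$ cells while $nr=2n$. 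So your diagnosis of the obstacle is accurate, and your counting via maximal chains is the right bookkeeping once the chain structure is in place.

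The gap is precisely where you locate it yourself: you propose to build $\alpha_n$ inductively by ``absorbing'' the new boundary $f^{-m}\partial\beta$ into the previously constructed one, but you do not say how to perform this redrawing while preserving the requirement that each resulting cell still lie in some $f^{-m}A_i$. In a general compact metric space there is no smoothness or transversality to lean on, and it is not obvious that the absorption can be done Borel-measurably and without destroying the refinement property $f^{k}\alpha_n\prec\mathcal A$ for the earlier $k$. Bowen's proof does carry out exactly such an inductive construction, and it is not a one-liner; your sketch stops just before that construction begins. As written, the proposal is a correct outline with the decisive step missing rather than wrong.
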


\begin{lemma}\cite[Lemma $3$]{Bowen1973} \label{lemma:3}
Given a finite Borel partition $\beta$ of $M$ and $\varepsilon >0$, there is an open cover $\mathcal A$ of $M$ so that $H_{\mu}(\beta | \alpha)<\varepsilon$ whenever $\alpha$ is a finite Borel partition of $M$ with $\alpha \prec \mathcal A$.
\end{lemma}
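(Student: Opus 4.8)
The plan is to exploit the fact that $H_\mu(\beta\mid\alpha)$ is small precisely when $\beta$ is \emph{almost measurable} with respect to $\alpha$, and to force this by choosing a cover $\mathcal A$ whose members separate the atoms of $\beta$ away from a set of arbitrarily small $\mu$-measure. Write $\beta=\{B_1,\dots,B_r\}$ (the case $r=1$ is trivial since then $H_\mu(\beta\mid\alpha)=0$) and fix a small parameter $\rho>0$ to be pinned down at the end. Since $\mu$ is a Borel probability measure on the compact metric space $M$ it is inner regular, so I would first choose compact sets $K_j\subset B_j$ with $\mu(B_j\setminus K_j)<\rho$; the $K_j$ are pairwise disjoint and compact, whence $d_0:=\min_{i\neq j}d(K_i,K_j)>0$.

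Next I would build the cover. Put $\delta_0:=d_0/3$, let $V_j:=\{x:d(x,K_j)<\delta_0\}$ for $1\le j\le r$, and let $V_0:=M\setminus\bigcup_j K_j$. The sets $V_1,\dots,V_r$ are pairwise disjoint open neighborhoods of the $K_j$ and, together with the open set $V_0$, they cover $M$; I take $\mathcal A=\{V_0,V_1,\dots,V_r\}$. The point of this construction is the following dichotomy for the atoms of any partition $\alpha\prec\mathcal A$: each atom $A$ is contained in some member of $\mathcal A$, so either $A\subset V_j$ for a (unique) $j\ge 1$, in which case $A$ misses every $K_i$ with $i\neq j$ and hence $A\cap\bigcup_i K_i=A\cap K_j\subset B_j$; or $A$ lies in no $V_j$ with $j\ge 1$ and is therefore contained in $V_0=M\setminus\bigcup_i K_i$. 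Call the atoms of the first type \emph{good} (with label $j$) and those of the second type \emph{bad}. In either case the ``error region'' of $A$, namely the part of $A$ not captured by its label, is contained in $M\setminus\bigcup_i K_i$.

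To finish I would quantify this. Define the $\alpha$-measurable partition $\hat\beta$ that assigns to each good atom its label $j$ and to each bad atom the label $1$, and let $E$ be the set where $\beta$ and $\hat\beta$ disagree. By the dichotomy, $E\subset M\setminus\bigcup_i K_i$, so $\mu(E)\le\sum_j\mu(B_j\setminus K_j)<r\rho$. Since $\hat\beta$ is coarser than $\alpha$, monotonicity of conditional entropy gives $H_\mu(\beta\mid\alpha)\le H_\mu(\beta\mid\hat\beta)$, and a Fano-type estimate bounds the right-hand side by $h(\mu(E))+\mu(E)\log(r-1)$, where $h$ denotes the binary entropy function. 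As $\rho\to 0$ this bound tends to $0$, so choosing $\rho$ with $h(r\rho)+r\rho\log(r-1)<\varepsilon$ yields $H_\mu(\beta\mid\alpha)<\varepsilon$ for every admissible $\alpha$, as required. The only real obstacle is this quantitative last step: one must convert smallness of the error \emph{measure} into smallness of the conditional \emph{entropy}, uniformly over all $\alpha\prec\mathcal A$. This is exactly what the majority-decoding partition $\hat\beta$ together with the Fano inequality delivers; equivalently, one can argue by hand using the elementary concavity bound $H_{\mu_A}(\beta)\le h(t_A)+t_A\log(r-1)$ on each good atom $A$, where $t_A=\mu(A\cap(M\setminus\bigcup_i K_i))/\mu(A)$, summed against $\mu(A)$ and combined with $\sum_A\mu(A)\,t_A\le\mu(E)<r\rho$.
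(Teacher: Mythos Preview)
The paper does not give its own proof of this lemma; it simply quotes it as \cite[Lemma~3]{Bowen1973} and uses it as a black box in the proof of item~(2). Your argument is correct and is essentially Bowen's original proof: approximate each atom $B_j$ from inside by a compact set, separate the resulting disjoint compacta by open neighborhoods to form $\mathcal A$, and then observe that any $\alpha\prec\mathcal A$ determines $\beta$ up to an error set of small measure, which forces $H_\mu(\beta\mid\alpha)$ to be small via a Fano-type bound. There is nothing to compare beyond this, since the paper defers the proof entirely to Bowen.
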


\begin{proof}[Proof of (2)] 
Let $\beta$ be a finite Borel partition of $M$ and $\varepsilon >0$. 
Let $\mathcal A$ be as in Lemma \ref{lemma:3} and $\alpha_n$ as in Lemma \ref{lemma:22}. Then, using the properties stated in Lemma \ref{lemma:auxx}, we have
\begin{align*}
h_{\mu}^u(f) = h_{\mu}(f,\beta | \eta) \stackrel{ \text{Lemma } \ref{lemma:auxx} \text{.(a)} }{=} & n^{-1}h_{\mu}(f^n, \beta_0^{n-1} | \eta ) \\
\stackrel{ \text{Lemma } \ref{lemma:auxx} \text{.(b)} }{\leq} &  n^{-1}h_{\mu}(f^n,\alpha_n | \eta) + n^{-1}H_{\mu}(\beta_0^{n-1} | \alpha_n) \\
\stackrel{ \text{Lemma } \ref{lemma:2} }{\leq }  & n^{-1} [ h^u_H(f^n,Y) + \log (n\cdot \text{card}(\mathcal A))]+n^{-1} \sum_{k=0}^{n-1} H_{\mu}(f^{-k}\beta | \alpha_n) \\
\leq & h^u_H(f,Y) + n^{-1} \log (n \cdot \text{card}(\mathcal A)) + n^{-1} \sum_{k=0}^{n-1} H_{\mu}(\beta | f^k \alpha_n) \\
\stackrel{ \text{Lemma } \ref{lemma:3} }{\leq }  & h^u_H(f,Y) + n^{-1} \log (n\cdot \text{card}(\mathcal A)) + \varepsilon .
\end{align*}
Above we have also used the classical fact from entropy theory $H_{\mu}(f^{-1}\beta | f^{-1}\alpha) = H_{\mu}(\beta | \alpha) $.
Taking $n\rightarrow \infty$ and $\varepsilon \rightarrow 0$ we get
\[h_{\mu}^u(f) \leq h^u_H(f,Y).\]
\end{proof}

\begin{proof}[Proof of (3)]
By (1) we have:
\[h^{u}_H(f,\overline{\mathcal F^u(x,\delta)}) \leq h^u_{top}(f, \overline{\mathcal F^u(x,\delta)}),\quad  \text{for any } \delta>0 .\]
Now we can write $M_x$ as a countable union of sets of the form $\overline{\mathcal F^u(x,\delta)}$ and, consequently, by item (b) of Lemma \ref{lemma:prop2} we conclude that
\[h^{u}_H(f) = h^{u}_H(f,M) \leq h^u_{top}(f).\]
On the other hand, by item (2) it follows that for any ergodic invariant measure $\mu$ we have $h^u_{\mu}(f) \leq h^u_H(f,M)$, thus
\[\sup_{\mu \in \mathcal M^e_f(M)} h^u_{\mu}(f) \leq h^u_H(f,M) = h^u_H(f).\]
Finally, by Theorem \ref{theo:variational} we conclude that $h^u_{top}(f) \leq h^{u}_H(f)$. Thus $h^{u}_H(f) = h^u_{top}(f)$ as we wanted to show.
\end{proof}

\section*{Acknowledgement}
The author thanks the anonymous referees for giving several constructive comments which substantially contributed to a better writing of this paper. We also thanks FAPESP for its financial support under process \# 2016/05384-0.

\bibliographystyle{plain}
\bibliography{Referencias2.bib}

\begin{thebibliography}{10}

\bibitem{Bowen}
R.~Bowen.
\newblock Entropy for group endomorphisms and homogeneous spaces.
\newblock {\em Trans. Amer. Math. Soc.}, 153:401--414, 1971.

\bibitem{Bowen1973}
R.~Bowen.
\newblock Topological entropy for noncompact sets.
\newblock {\em Trans. Amer. Math. Soc.}, 184:125--136, 1973.

\bibitem{BW}
K.~Burns and A.~Wilkinson.
\newblock On the ergodicity of partially hyperbolic systems.
\newblock {\em Annals of Mathematics}, 171(1):451--489, 2010.

\bibitem{HPS}
M.~Hirsch, C.~Pugh, and M.~Shub.
\newblock Invariant manifolds.
\newblock {\em \em Lecture Notes in Math., 583, Springer-Verlag, New York},
  1977.

\bibitem{HuHuaWu}
Huyi Hu, Yongxia Hua, and Weisheng Wu.
\newblock Unstable entropies and variational principle for partially hyperbolic
  diffeomorphisms.
\newblock {\em Adv. Math.}, 321:31--68, 2017.

\bibitem{HuaSaghinXia}
Y.~Hua, R.~Saghin, and Z.~Xia.
\newblock Topological entropy and partially hyperbolic diffeomorphisms.
\newblock {\em Ergodic Theory Dynam. Systems}, 28(3):843--862, 2008.

\bibitem{Ledrappier}
F.~Ledrappier.
\newblock Propri\'et\'es ergodiques des mesures de {S}ina\"\i.
\newblock {\em Inst. Hautes \'Etudes Sci. Publ. Math.}, (59):163--188, 1984.

\bibitem{LY2}
F.~Ledrappier and L.~S. Young.
\newblock The metric entropy of diffeomorphisms. {II}. {R}elations between
  entropy, exponents and dimension.
\newblock {\em Ann. of Math. (2)}, 122(3):540--574, 1985.

\bibitem{LY1}
F.~Ledrappier and L.~S. Young.
\newblock The metric entropy of diffeomorphisms: Part {I}: Characterization of
  measures satisfying {P}esin's entropy formula.
\newblock {\em Annals of Mathematics}, 122(3):509--539, 1985.

\bibitem{YP4}
Yakov~B. Pesin.
\newblock {\em Dimension theory in dynamical systems}.
\newblock Chicago Lectures in Mathematics. University of Chicago Press,
  Chicago, IL, 1997.

\bibitem{Poletti2018}
M.~Poletti.
\newblock Geometric growth for anosov maps on the 3 torus.
\newblock {\em Bulletin of the Brazilian Mathematical Society, New Series},
  49(4):699--713, 2018.

\bibitem{PT}
G.~Ponce and A.~Tahzibi.
\newblock Central lyapunov exponents of partially hyperbolic diffeomorphisms on
  $\mathbb{T}^3$.
\newblock {\em Proc. Amer. Math. Soc.}, 142:3193--3205, 2014.

\bibitem{Rohlin52}
V.~A. Rohlin.
\newblock On the fundamental ideas of measure theory.
\newblock {\em Amer. Math. Soc. Translation}, 1952(71):55, 1952.

\bibitem{SaghinYang}
R.~Saghin and J.~Yang.
\newblock Lyapunov exponents and rigidity of anosov automorphisms and skew
  products.
\newblock {\em arXiv:1802.08266}.

\bibitem{YangTahzibi}
A.~Tahzibi and J.~Yang.
\newblock Invariance principle and rigidity of high entropy measures.
\newblock {\em to appear in Trans. Amer. Math. Soc.}

\bibitem{TianWu}
X.~Tian and W.~Wu.
\newblock Unstable entropies and dimension theory of partially hyperbolic
  systems.
\newblock {\em arXiv:1811.03797}, 2018.

\bibitem{Yang}
J.~Yang.
\newblock Entropy along expanding foliations.
\newblock {\em arXiv:1601.05504}.

\end{thebibliography}

\end{document}